\newtheorem{theo}{Theorem}
\newtheorem{lem}{Lemma}
\newtheorem*{conj}{Conjecture}
\title{Two integer sequences related to Catalan numbers}
\author{Michel Lassalle\\
\small Centre National de la Recherche Scientifique\\[-0.8ex]
\small Institut Gaspard-Monge, Universit\'e de Marne-la-Vall\'ee\\[-0.8ex]
\small 77454 Marne-la-Vall\'ee Cedex, France\\[-0.8ex]
\small \texttt{lassalle@univ-mlv.fr}\\[-0.8ex]
\small \texttt{http://igm.univ-mlv.fr/{\textasciitilde}lassalle}
}
\date{}
\begin{document}
\maketitle

\begin{abstract}
We prove the following conjecture of Zeilberger. Denoting by $C_n$ the Catalan number, define inductively $A_n$ by $(-1)^{n-1}A_n=C_n+\sum_{j=1}^{n-1} (-1)^{j} \binom{2n-1}{2j-1} A_j \,C_{n-j}$ and $a_n=2A_n/C_n$. Then $a_n$ (hence $A_n$) is a positive integer.
\end{abstract}

\section{Introduction}\label{secone}

Let $z$ be an indeterminate. For any nonnegative integer $r$, we denote by $C_r(z)$ the Narayana polynomial defined by $C_0(z)=1$ and
\[C_r(z)=\sum_{k=1}^{r} N(r,k) z^{k-1},\]
with the Narayana numbers $N(r,k)$ given by
\[ N(r,k)=\frac{1}{r} \binom{r}{k-1} \binom{r}{k}.\]
There is a rich combinatorial litterature on this subject. Here we shall only refer to~\cite{MS,Su},~\cite[Exercice 36]{S} and references therein. 

We have $C_r\eqref{eqone}=C_r$, the ordinary Catalan number, since
\[C_r=\frac{1}{r+1}\binom{2r}{r}=\sum_{k=1}^{r} N(r,k).\]
Moreover~\cite{C} the Narayana polynomial $C_r(z)$ can be expressed in terms of Catalan numbers as
\begin{equation}\label{eqone}
C_r(z)=\sum_{m\ge0} z^m (z+1)^{r-2m-1} \binom{r-1}{2m}C_m.
\end{equation}

The polynomial 
\[(z+1)\,C_r(z)-C_{r+1}(z)=(1-r)z^{r-1}+\ldots\]
can be expressed, in a unique way, in terms of the monic polynomials $z^mC_{r-2m+1}(z)$ with degree $r-m$. As a consequence, 
we may define the real numbers $A_m(r)$ by
\begin{equation}\label{eqtwo}
(z+1)\,C_r(z)-C_{r+1}(z)=\sum_{m\ge1}
(-z)^m\binom{r-1}{2m-1}A_m(r)\, C_{r-2m+1}(z).
\end{equation}
Note that the real numbers $\{A_m(r), m\ge 1\}$ depend on $r$ and that we have $A_1(r)=1$.

By relation \eqref{eqone}, the left-hand side may be written as
\[-\sum_{k\ge0} z^k (z+1)^{r-2k} \binom{r-1}{2k-1}C_k.\]
Similarly the right-hand side becomes
\[\sum_{m\ge1, n\ge 0}
(-1)^m z^{m+n} (z+1)^{r-2m-2n} \binom{r-1}{2m-1}
\binom{r-2m}{2n}A_m(r)\,C_n.\]
Comparing coefficients of $z^k(z+1)^{r-2k}$ on both sides yields
\[-C_{k}\binom{r-1}{2k-1}=\sum_{m+n=k}(-1)^m\binom{r-1}{2m-1}\binom{r-2m}{2n}A_m(r)\,C_n,\]
with $r\ge2k$. This is easily transformed to
\[C_n=\sum_{j=1}^n (-1)^{j-1} \binom{2n-1}{2j-1} A_j(r) \,C_{n-j}.\]
In other words, $A_n:=A_n(r)$ is an integer independent of $r$, given by the recurrence formula
\begin{equation}\label{eqthree}
(-1)^{n-1}A_n=C_n+\sum_{j=1}^{n-1} (-1)^{j} \binom{2n-1}{2j-1} A_j \,C_{n-j}.
\end{equation}  
This relation can be chosen as a definition, equivalent to \eqref{eqtwo}.

In Section \ref{sectwo} we prove
\begin{theo}\label{thone}
The integers $\{A_n, n\ge 2\}$ are positive and increasing.
\end{theo}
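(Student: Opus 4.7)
The plan is to derive a closed-form exponential generating function for the sequence $(A_n)$ in terms of Bessel functions, from which positivity becomes manifest. Using $C_n/(2n)! = 1/(n!(n+1)!)$ and $C_n/(2n-1)! = 2/((n-1)!(n+1)!)$ one recognises the two generating series attached to the recurrence \eqref{eqthree} as
\[ F(x) := \sum_{n\ge 0} C_n \frac{x^{2n}}{(2n)!} = \frac{I_1(2x)}{x}, \qquad \sum_{n\ge 1} C_n \frac{x^{2n-1}}{(2n-1)!} = \frac{2 I_2(2x)}{x}. \]
Setting $\tilde A(x) := \sum_{n\ge 1} (-1)^{n-1} A_n x^{2n-1}/(2n-1)!$, the recurrence \eqref{eqthree} is precisely the convolution identity $\tilde A(x)\, F(x) = 2I_2(2x)/x$. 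Substituting $x \mapsto ix$ converts $I_\nu$ into the ordinary Bessel functions $J_\nu$ and simultaneously absorbs the alternating sign in $\tilde A$, yielding the compact identity
\[ G(x) := \sum_{n\ge 1} A_n \frac{x^{2n-1}}{(2n-1)!} = \frac{2 J_2(2x)}{J_1(2x)}. \]

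Next I would Mittag-Leffler expand the right-hand side. The Bessel recurrence $J_2(z) = (2/z) J_1(z) - J_0(z)$ together with the classical logarithmic-derivative expansion
\[ \frac{J_1'(z)}{J_1(z)} = \frac{1}{z} - \sum_{k\ge 1} \frac{2z}{j_{1,k}^2 - z^2}, \]
where $j_{1,k}$ denotes the $k$-th positive zero of $J_1$, combine to give $J_2(z)/J_1(z) = \sum_{k\ge 1} 2z/(j_{1,k}^2 - z^2)$. Expanding each term geometrically and matching coefficients of $x^{2n-1}$ in $G(x)$ yields the explicit formula
\[ A_n = (2n-1)!\;2^{2n+1} \sum_{k\ge 1} \frac{1}{j_{1,k}^{2n}}, \qquad n\ge 1, \]
which is manifestly a sum of strictly positive reals; hence $A_n > 0$.

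For the \emph{increasing} assertion, set $\sigma_{2m} := \sum_{k\ge 1} j_{1,k}^{-2m}$, so that $A_{n+1}/A_n = 4(2n)(2n+1)\,\sigma_{2n+2}/\sigma_{2n}$. Cauchy--Schwarz applied to the sequences $(j_{1,k}^{-(n-1)})$ and $(j_{1,k}^{-(n+1)})$ gives $\sigma_{2n}^2 \le \sigma_{2n-2}\,\sigma_{2n+2}$; hence $\sigma_{2n+2}/\sigma_{2n}$ is non-decreasing in $n$. Using the classical Rayleigh values $\sigma_4 = 1/192$ and $\sigma_6 = 1/3072$ this forces $\sigma_{2n+2}/\sigma_{2n} \ge 1/16$ for every $n\ge 2$, so $A_{n+1}/A_n \ge n(2n+1)/2 > 1$ throughout the relevant range.

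The main obstacle is the rigorous justification of the Mittag--Leffler step: one needs the Weierstrass product for $J_1$ together with a growth estimate on $J_2/J_1$ along circles avoiding the zeros of $J_1$ in order to justify both the partial-fraction expansion and the termwise geometric expansion. These are standard but must be invoked carefully. The Cauchy--Schwarz monotonicity of the Rayleigh sums is what keeps the increasing assertion short; without this observation one would be forced into ad hoc case-by-case estimates of the ratios $\sigma_{2n+2}/\sigma_{2n}$.
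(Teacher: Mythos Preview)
Your argument is correct and takes a genuinely different route from the paper. Both proofs start from the same observation, namely that the recurrence \eqref{eqthree} is equivalent to the functional identity $\mathbf{A}(z)\mathbf{C}(z)=2\mathbf{C}'(z)$; in your notation this is your convolution $\tilde A(x)F(x)=2I_2(2x)/x$, and the ratio $\mathbf{C}'/\mathbf{C}$ is essentially $J_2(2x)/J_1(2x)$ since ${}_0F_1(2;z)$ is $J_1$ up to a power of $z$. The divergence comes in how positivity is extracted. The paper writes this ratio as the Gauss continued fraction for ${}_0F_1(x+1;z)/(x\,{}_0F_1(x;z))$ and expands it, via iterated binomial series, into an explicit positive sum over compositions of $n-1$; the growth estimate $A_{n+1}>A_n$ is then obtained by a term-by-term comparison of the compositions contributing to $p_n$ and to $p_{n+1}$. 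You instead invoke the Weierstrass product of $J_1$ to obtain the Rayleigh-sum formula $A_n=(2n-1)!\,2^{2n+1}\sum_k j_{1,k}^{-2n}$, and deduce monotonicity from the log-convexity of the Rayleigh sums $\sigma_{2m}$ via Cauchy--Schwarz together with the classical values $\sigma_4=1/192$, $\sigma_6=1/3072$. Your approach buys an explicit closed form for $A_n$ in terms of Bessel zeros and a one-line monotonicity argument; the paper's approach stays entirely elementary (no infinite products, no analytic justification needed), generalises uniformly to the one-parameter family $h_n=1/(n!(x)_n)$, and feeds directly into the symmetric-function framework used later for Theorem~\ref{ththree}. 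The Mittag--Leffler step you flag is indeed the only place requiring care, and the standard Hadamard product for $J_1$ together with absolute convergence of $\sum_k j_{1,k}^{-2}$ disposes of it.
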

The values of $A_n$ for $1\le n\le 14$ are given by
\begin{align*}
1, 1, 5, 56, 1092, 32670, 1387815, 79389310, 5882844968&, 548129834616, 62720089624920,\\
8646340208462880, 1413380381699497200&, 270316008395632253340.
\end{align*}

At the time the positivity of $A_n$ was only conjectured, we asked Doron Zeilberger for an advice. He suggested to consider also the numbers $a_n=2A_n/C_n$ which, by an easy transformation of \eqref{eqthree}, are inductively defined by
\[(-1)^{n-1}a_n=2+\sum_{j=1}^{n-1}(-1)^{j} \binom{n-1}{j-1}\binom{n+1}{j+1}\frac{a_j}{n-j+1}. \]
In Section \ref{secthree} we prove the following conjecture of Zeilberger.
\begin{theo}\label{thtwo}
The numbers $\{a_n, n\ge 2\}$ are increasing positive integers. They are odd if and only if $n=2^k-2$ for some $k\ge 2$.
\end{theo}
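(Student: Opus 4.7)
Positivity of $a_n$ is immediate from Theorem~\ref{thone}: since $a_n=2A_n/C_n$ with $A_n,C_n>0$ for $n\ge 2$ (and $a_1=2$ by direct computation), we have $a_n>0$ throughout. The substantive content is therefore (i)~integrality, (ii)~monotonicity, and (iii)~the parity criterion.

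The main obstacle is~(i). The recurrence
\[(-1)^{n-1}a_n = 2 + \sum_{j=1}^{n-1}(-1)^{j}\binom{n-1}{j-1}\binom{n+1}{j+1}\frac{a_j}{n-j+1}\]
has individually non-integral rational coefficients (for instance $3/2$ at $n=2$, $j=1$), so integrality must arise from cancellation in the signed sum. My plan is to repackage~\eqref{eqthree} as the exponential generating function identity
\[\log\!\left(\sum_{n\ge 0}\frac{x^{2n}}{n!(n+1)!}\right) = \frac{1}{2}\sum_{j\ge 1}(-1)^{j-1}\frac{a_j\,x^{2j}}{j!\,(j+1)!},\]
obtained as follows: setting $f(x)=\sum_{j\ge 1}(-1)^{j-1}A_jx^{2j-1}/(2j-1)!$ and $g(x)=\sum_{n\ge 0}C_nx^{2n}/(2n)!$, the equivalent form $C_n=\sum_{j=1}^{n}(-1)^{j-1}\binom{2n-1}{2j-1}A_jC_{n-j}$ of~\eqref{eqthree} translates into $f=g'/g$, and integration plus the substitution $A_j=a_jC_j/2$ yields the identity above. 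The left-hand side is $\log(I_1(2x)/x)$, and its logarithmic derivative obeys a Riccati identity coming from the Bessel ODE $yG''+2G'-G=0$ satisfied by $G(y)=\sum_n y^n/(n!(n+1)!)$. I would exploit this to derive a functional equation for the generating function of the $a_j$ and then extract $a_n$ in a form that is manifestly integer (for instance, as a weighted count over plane trees or lattice paths). A fallback is a direct $p$-adic induction: for each prime $p$, Kummer's theorem should show that the $p$-part of $1/(n-j+1)$ is always absorbed by the binomials appearing in the summand.

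Granted~(i), monotonicity (ii) should follow by comparing the recurrences at $n$ and $n+1$: pair terms with indices $j$ and $j+1$, apply the inductive hypothesis $a_j<a_{j+1}$, and extract positivity from the ``$2$'' constant term. For the parity statement (iii), a preliminary observation is that integrality of $A_n$ together with the classical fact that $C_n$ is odd iff $n+1$ is a power of~$2$ already forces $a_n$ to be even whenever $n=2^k-1$. To pin down the full pattern I would iterate the recurrence modulo~$2$, compute the 2-adic valuations of the coefficients via Kummer's theorem, and identify a diagonal contribution that survives modulo~$2$ exactly when $n+2$ is a power of~$2$. The hardest step by far is~(i); once $a_n$ is known to be a positive integer, (ii) and (iii) reduce to fairly standard inductions and 2-adic bookkeeping.
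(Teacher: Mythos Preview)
Your Riccati-from-Bessel idea is precisely the engine the paper uses, but you stop just before the payoff. Carrying it through: from $P=G'/G$ and the ODE $yG''+2G'-G=0$ one gets $yP^2+yP'+2P=1$, hence $(n+2)\,p_{n+1}=-\sum_{r=1}^n p_r\,p_{n-r+1}$; after the substitutions $p_n=(-1)^{n-1}A_n/\bigl(2(2n-1)!\bigr)$ and $A_n=a_nC_n/2$ this becomes the \emph{quadratic} recurrence (Theorem~\ref{ththree})
\[a_{n+1}=\frac{1}{4}\sum_{r=1}^{n} c_{n,r}\,a_r a_{n-r+1},\qquad c_{n,r}=\frac{2}{n+1}\binom{n+1}{r+1}\binom{n+1}{r-1}=\binom{n}{r-1}\binom{n}{r}-\binom{n}{r-2}\binom{n}{r+1}.\]
Two consequences you should make explicit: (a)~since $c_{n,r}$ is visibly a nonnegative integer, integrality of $a_{n+1}$ is a purely $2$-adic question --- your fallback ``for each prime $p$'' is a red herring --- and the needed divisibility of $c_{n,r}$ (by $2$ when $n$ is even, by $4$ when $n$ is odd and $r$ even) is a short Kummer computation (Lemma~\ref{lemfour}); (b)~positivity and monotonicity of $a_n$ are immediate by induction from this recurrence, because every summand is positive. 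Your plan to extract monotonicity from the signed \emph{linear} recurrence by pairing $j$ with $j{+}1$ is much harder to control and is unnecessary.

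For the parity criterion, ``standard $2$-adic bookkeeping'' undersells one genuine extra ingredient. When $n=2p+1$, the middle term $\frac{1}{4}c_{2p+1,p+1}\,a_{p+1}^2$ decides the parity of $a_{2p+2}$. If $p+1=2^k-1$ then $c_{2p+1,p+1}$ is \emph{odd} (Lemma~\ref{lemfive}), so you must show that $a_{p+1}^2/4=(A_{p+1}/C_{p+1})^2$ is an odd integer, i.e.\ that $A_{p+1}$ and $C_{p+1}$ are both odd. That is the content of Theorem~\ref{thfour}, proved separately from the linear recurrence~\eqref{eqthree}; it does not fall out of the quadratic recurrence or from Kummer's theorem applied to the coefficients $c_{n,r}$ alone.
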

The values of $a_n$ for $1\le n\le 16$ are given by
\begin{align*}
& 2, 1, 2, 8, 52, 495, 6470, 111034, 2419928, 65269092, 2133844440,
83133090480, \\ & 3805035352536, 202147745618247, 12336516593999598,
857054350280418290.
\end{align*}
Both sequences seem to be new.

Of course since $2A_n=a_nC_n$, Theorem \ref{thone} is an obvious consequence of Theorem \ref{thtwo}. However we begin by a direct proof of Theorem \ref{thone}, which has its own interest. This proof describes the algebraic framework of our method (the theory of symmetric functions), and it yields a generating function for the $A_n$'s.

The $A_n$'s are also worth a separate study in view of their connection with probability theory, which was recently noticed. Actually Novak~\cite{N} observed, as an empirical evidence, that the integers $(-1)^{n-1}A_n$ are precisely the (classical) cumulants of a standard semicircular random variable. Independently Josuat-Verg\`es~\cite{JV} defined a $q$-semicircular law, which specializes to the standard semicircular law at $q=0$. He described combinatorial properties of its (classical) cumulants, which are polynomials in $q$ having $(-1)^{n-1}A_n$ as their constant terms. It follows from his work that $A_n$ may be obtained by a weighted enumeration of connected matchings (equivalently, fixed-point free involutions on $1,...,2n$ such that no proper interval is stable). Finding a similar bijective proof for $a_n$ would be very interesting.

\section{Properties of $A_n$}\label{sectwo}

This section is devoted to a proof of Theorem \ref{thone}. We shall use two remarks of Krattenthaler and Lascoux. Define
\begin{align*}
\mathbf{C}(z)&=\sum_{n\ge 0} \frac{C_n}{(2n)!}z^n=\sum_{n\ge 0} \frac{1}{n!(n+1)!}z^n,\\
\mathbf{A}(z)&=\sum_{m\ge 1} (-1)^{m-1}\frac{A_m}{(2m-1)!}z^{m-1}.
\end{align*}
Krattenthaler observed that the definition \eqref{eqthree} is equivalent with
\begin{equation}\label{eqfour}
\mathbf{A}(z)\mathbf{C}(z)=2\frac{d}{dz}\mathbf{C}(z).
\end{equation}
Therefore we have only to show that, if we write
\[\mathbf{C}^\prime(z)/\mathbf{C}(z)=
\frac{d}{dz}\mathrm{log}(\mathbf{C}(z))=\sum_{m\ge 1}c_m z^{m-1},\]
the coefficient $c_m$ has sign $(-1)^{m-1}$.

Let $\mathbb{S}$ denote the ring of symmetric functions~\cite[Section 1.2]{Ma}. Consider the classical bases of complete functions $h_n$ and power sums $p_n$. Denote
\[H(z) = \sum_{n\ge 0} h_nz^n, \quad 
P(z)=\sum_{n\ge 1} p_nz^{n-1}\]
their generating functions. It is well known~\cite[p. 23]{Ma} that $P(z)= H^\prime(z)/H(z)$.

Since the complete symmetric functions $h_n$ are algebraically independent, they may be specialized in any way. More precisely, for any sequence of numbers $\{c_n, n\ge 0\}$ with $c_0=1$, there is a homomorphism from $\mathbb{S}$ into the ring of real numbers, 
taking $h_n$ into $c_n$. Under the extension of this ring homomorphism to formal power series, the image of $H(z)$ is the generating function of the $c_n$'s. By abuse of notation, we write $h_n=c_n$ and $H(z)=\sum_{n\ge0} c_nz^n$.

Therefore it is sufficient to prove the following statement.
\begin{lem}\label{lemone}
When the complete symmetric functions are specialized to \[h_n=\frac{1}{n!(n+1)!},\]
i.e. when $H(z)=\mathbf{C}(z)$, the coefficients of $P(-z)$ are all positive.
\end{lem}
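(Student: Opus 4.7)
The plan is to exploit the Hadamard factorization of $\mathbf{C}(z)$, whose zeros are explicitly known through Bessel function theory. A direct comparison of power series shows that
\[\mathbf{C}(z)=\frac{J_1(2\sqrt{-z})}{\sqrt{-z}},\]
where $J_1$ is the standard Bessel function of the first kind. Since the nonzero zeros of $J_1$ are exactly the simple real numbers $\pm j_{1,k}$ for $k\ge 1$, the zeros of $\mathbf{C}(z)$ are the negative reals $-j_{1,k}^2/4$.

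Because $\mathbf{C}(z)$ is entire of order $1/2$ and $\sum 1/j_{1,k}^2$ converges (one has $j_{1,k}\sim k\pi$), Hadamard's theorem combined with the classical product for $J_1$ gives the absolutely convergent factorization
\[\mathbf{C}(z)=\prod_{k\ge 1}\Bigl(1+\frac{4z}{j_{1,k}^2}\Bigr).\]
Taking the logarithmic derivative and substituting $z\mapsto -z$ yields
\[P(-z)=\sum_{k\ge 1}\frac{4/j_{1,k}^2}{1-4z/j_{1,k}^2}=\sum_{m\ge 1}\Bigl(\sum_{k\ge 1}\frac{4^{m}}{j_{1,k}^{2m}}\Bigr)\,z^{m-1},\]
so the coefficient of $z^{m-1}$ in $P(-z)$ is a convergent sum of positive terms. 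This proves Lemma \ref{lemone} and, along the way, exhibits the explicit formula $(-1)^{m-1}p_m=\sum_{k\ge 1}4^m/j_{1,k}^{2m}$ for the specialized power sums.

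The genuinely nontrivial inputs are external: (i) that all nonzero zeros of $J_1$ are real and simple, and (ii) the classical infinite product for $J_1(x)/x$. Both are textbook facts, so the main obstacle reduces to routine bookkeeping, namely verifying the order of $\mathbf{C}(z)$ and justifying termwise differentiation of the absolutely convergent product. A purely algebraic alternative would be to iterate Newton's identity $n h_n=\sum_{i=1}^n p_i h_{n-i}$ with $h_n=1/(n!(n+1)!)$ to extract the $p_m$ directly, but without the spectral interpretation it is not at all clear how to prove that the resulting alternating rational expression has the correct sign; I expect that route to be substantially more intricate.
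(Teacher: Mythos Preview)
Your argument is correct: the identification $\mathbf{C}(z)=J_{1}(2\sqrt{-z})/\sqrt{-z}$, the location of zeros at $-j_{1,k}^{2}/4$, the order~$1/2$ computation, the Hadamard product, and the resulting Rayleigh-sum formula $(-1)^{m-1}p_m=\sum_{k\ge1}4^{m}/j_{1,k}^{2m}$ all check out. This is a genuinely different route from the paper's. The paper proves the statement as the case $x=2$ of a one-parameter family with $h_n=1/(n!(x)_n)$, identifies $H(z)={}_0F_1(x;z)$, and then expands the ratio ${}_0F_1(x+1;z)/(x\,{}_0F_1(x;z))$ via Gauss's continued fraction; iterating the geometric series yields $(-1)^{n-1}p_n$ as a \emph{finite} positive sum indexed by compositions of $n-1$. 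Your approach is cleaner if one only wants positivity and gives a pleasant spectral formula, and it too extends to general $x>0$ by replacing $J_1$ with $J_{x-1}$ (whose zeros are real by Lommel's theorem). What the continued-fraction route buys, and your product does not immediately provide, is the termwise comparison between $p_n$ and $p_{n+1}$ that the paper exploits to obtain the monotonicity $A_{n+1}>A_n$; from your Rayleigh sums one would need a separate estimate (e.g.\ $4^{m+1}/j_{1,k}^{2m+2}$ versus $4^{m}/j_{1,k}^{2m}$ together with the factorial growth $(2n+1)!/(2n-1)!$) to recover that part of Theorem~\ref{thone}.
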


Lascoux observed an empirical evidence for the following more general result (which gives Lemma \ref{lemone} when $x=2$). 
\begin{lem}\label{lemtwo}
Let $x$ be a positive real number and $(x)_k = \prod_{i=1}^k (x+i-1)$ denote the classical rising factorial. When the complete symmetric functions are specialized to 
\[h_n=\frac{1}{n!(x)_n},\]
the coefficients of $P(-z)$ are all positive.
\end{lem}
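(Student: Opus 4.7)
The plan is to recognize $H(z)=\sum_{n\ge 0}z^n/(n!(x)_n)$ as the hypergeometric function ${}_0F_1(;x;z)$ and to exploit its connection with Bessel functions in order to perform a Hadamard-style factorization of $H(-z)$. Specifically, a direct substitution into the series definition of $J_\nu$ shows
\[H(-z)=\Gamma(x)\,z^{-(x-1)/2}\,J_{x-1}(2\sqrt{z}),\]
so, viewed as an entire function of $z$, the zeros of $H(-z)$ are precisely $z_k=j_{x-1,k}^2/4$ for $k\ge 1$, where $j_{\nu,k}$ denotes the $k$-th positive zero of $J_\nu$.

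The essential analytic inputs are two classical facts. First, since $x>0$ gives $x-1>-1$, all zeros of $J_{x-1}$ are real and positive (provable by Sturm comparison or by realising $J_\nu$ as the eigenfunction of a self-adjoint operator on the half-line); consequently each $z_k$ is strictly positive. Second, the estimate $1/(n!(x)_n)\sim 1/(n!\,\Gamma(n+x))$ shows that $H(-z)$ is entire of order $1/2$, hence of genus $0$. Hadamard's factorization theorem then yields
\[H(-z)=\prod_{k\ge 1}\Bigl(1-\frac{z}{z_k}\Bigr).\]

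With this product in hand the conclusion follows by a one-line computation. Using $\frac{d}{dz}H(-z)=-H'(-z)$, one has
\[P(-z)=\frac{H'(-z)}{H(-z)}=-\frac{d}{dz}\log H(-z)=\sum_{k\ge 1}\frac{1/z_k}{1-z/z_k}=\sum_{m\ge 1}\Bigl(\sum_{k\ge 1}\frac{1}{z_k^m}\Bigr)z^{m-1},\]
and every coefficient $\sum_{k\ge 1}z_k^{-m}$ is manifestly positive. The main obstacle is not the algebra but the reliance on Bessel-function theory to secure the reality and positivity of the zeros and the genus-zero Hadamard product; a proof staying within the symmetric-function framework, perhaps extracting the sign pattern of the $p_m$ directly from the recursion $(n+1)(n+x)h_{n+1}=h_n$, would be more in keeping with the spirit of the paper but does not seem to present itself naturally.
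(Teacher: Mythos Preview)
Your argument is correct. The identification $H(-z)=\Gamma(x)\,z^{-(x-1)/2}J_{x-1}(2\sqrt{z})$, the reality and positivity of the zeros of $J_\nu$ for $\nu>-1$, the order-$1/2$ estimate, and the genus-zero Hadamard product are all standard and combine exactly as you claim; the expansion of $-\frac{d}{dz}\log H(-z)$ then gives $(-1)^{m-1}p_m=\sum_{k\ge1}z_k^{-m}>0$.

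The paper takes a different route. It observes that $P(z)={}_0F_1(x+1;z)/\bigl(x\,{}_0F_1(x;z)\bigr)$ and invokes Gauss's continued fraction
\[P(z)=\cfrac{1}{x+\cfrac{z}{(x+1)+\cfrac{z}{(x+2)+\ddots}}},\]
then expands the continued fraction term by term to obtain an explicit positive-sum formula (the paper's equation~(5)) indexed by compositions. Your approach is cleaner for the bare positivity statement and has the pleasant interpretation of $(-1)^{m-1}p_m$ as a Newton power sum of the reciprocal Bessel zeros. The paper's approach, however, yields more: the combinatorial expansion over compositions is what allows the subsequent inequality $-(x+n-1)(x+n)p_{n+1}\ge 2p_n$, hence the monotonicity $A_{n+1}>A_n$, and it also produces the continued-fraction generating function for the $A_n$'s displayed at the end of Section~2. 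Your Hadamard product does not immediately deliver either of these.
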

\begin{proof}With this specialization we have
\[H(z)=\sum_{n\ge 0}\frac{1}{(x)_n}\frac{z^n}{n!}={}_0F_{1}(x;z),\]
the so called confluent hypergeometric limit function. Since 
\[\frac{d}{dz}\,{}_0F_{1}(x;z)=\frac{1}{x} \,{}_0F_{1}(x+1;z),\]
we have 
\begin{equation*}
P(z)=\frac{{}_0F_{1}(x+1;z)}{x\,{}_0F_{1}(x;z)}.
\end{equation*}
The classical Gauss's continued fraction~\cite[p. 347]{W} gives the following expression for the right-hand side
\[\frac{\,_0F_1(x+1;z)}{x\,_0F_1(x;z)} = \cfrac{1}{x + \cfrac{z}
{(x+1) + \cfrac{z}{(x+2) + \cfrac{z}{(x+3) + {}\ddots}}}}.\]
This continued fraction may be written as a Taylor series by iterating the usual binomial formula
\[{(1+f_1z)}^{-k_1}=\sum_{k_2\ge0} {f_1}^{k_2} \binom{k_1+k_2-1}{k_2} (-z)^{k_2},\]
where $f_1^{k_2}$ is itself of the form $(c_1(1+f_2z))^{-k_2}$.
This classical method (~\cite[Exercise 4.2]{Las},~\cite{R}) yields
\begin{equation*}
P(z)=\frac{1}{x}\sum_{(k_1,k_2,k_3,\ldots)} \prod_{i\ge 1}\left(\frac{-z}{(x+i-1)(x+i)}\right)^{k_i}\binom{k_i+k_{i+1}-1}{k_{i+1}},
\end{equation*}
where the $k_i$'s are nonnegative integers. This proves the lemma.
\end{proof}

In the previous formula, the contributions with $k_i=0$ and $k_{i+1} >0$ are necessarily zero. Hence for $n\ge 2$ we have
\begin{equation}\label{eqfive}
p_n=\frac{(-1)^{n-1}}{x}\sum_{\kappa \in \mathcal{C}_n} \prod_{i\ge 1}\left(\frac{1}{(x+i-1)(x+i)}\right)^{k_i}\binom{k_i+k_{i+1}-1}{k_{i+1}},
\end{equation}
with $\mathcal{C}_n$ the set of sequences $\kappa=(k_1,k_2,\ldots,k_l)$ of $l\le n-1$ (strictly) positive integers summing to $n-1$. There are $2^{n-2}$ such sequences (compositions of $n-1$). For instance
\[p_1=\frac{1}{x}, \quad p_2=\frac{-1}{x^2(x+1)}, \quad p_3=\frac{1}{x^2(x+1)^2(x+2)}+\frac{1}{x^3(x+1)^2}.\]

To each $\kappa=(k_1,k_2,\ldots,k_l)\in \mathcal{C}_n$ let us associate two elements of $\mathcal{C}_{n+1}$ defined by $\kappa_1=(k_1,k_2,\ldots,k_l+1)$ and $\kappa_2=(k_1,k_2,\ldots,k_l,1)$. We have
\[\mathcal{C}_{n+1}=\bigcup_{\kappa \in \mathcal{C}_n} \{\kappa_1,\kappa_2\}.\]
Denoting $g_{\kappa},g_{\kappa_1},g_{\kappa_2}$ the respective contributions to \eqref{eqfive} of $\kappa,\kappa_1,\kappa_2$, we have easily
\[-(x+n-1)(x+n)g_{\kappa_i}\ge g_{\kappa} \quad\quad (i=1,2),\]
the equality occuring for $\kappa=(1,1,\ldots,1)$ and $i=2$.
Summing all contributions to \eqref{eqfive}, we obtain
\[-(x+n-1)(x+n)p_{n+1}\ge 2 p_n.\] 
Since \eqref{eqfour} implies
\begin{equation}\label{eqsix}
A_n= (-1)^{n-1}2\, (2n-1)!  \, p_n\arrowvert_{x=2},
\end{equation}
substituting $x=2$, the above estimate yields $A_{n+1} > A_n$, which achieves the proof of Theorem \ref{thone}.\qed

Incidentally we have shown that the generating function of the $A_n$'s is given by
\[\sum_{m\ge 1} (-1)^{m-1}\frac{A_m}{(2m-1)!}z^{m-1}=
\cfrac{2}{2 + \cfrac{z}
{3 + \cfrac{z}{4 + \cfrac{z}{5 + {}\ddots}}}}.\]

\section{Properties of $a_n$}\label{secthree}

This section is devoted to the proof of Theorem \ref{thtwo}. The following lemma plays a crucial role.
\begin{lem}\label{lemthree}
When the complete symmetric functions are specialized to $h_n=1/((x)_nn!)$ with $x>0$, we have
\[zP(z)^2+z\frac{d}{dz}\,P(z)+xP(z)=1.\]
Equivalently for any integer $n\ge 1$, we have
\begin{equation}\label{eqseven}
(n+x)p_{n+1}=-\sum_{r=1}^np_rp_{n-r+1}.
\end{equation}
\end{lem}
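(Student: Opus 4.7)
The plan is to derive the claimed ODE for $P(z)$ directly from the second-order ODE satisfied by the confluent hypergeometric limit function $H(z)={}_0F_1(x;z)$, and then extract coefficients to obtain the recurrence.

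First, I would recall from the proof of Lemma~\ref{lemtwo} that under the given specialization $H(z)=\sum_{n\ge 0}\tfrac{z^n}{(x)_n n!}={}_0F_1(x;z)$. A straightforward coefficient comparison shows that $H$ satisfies the ODE
\[zH''(z)+xH'(z)-H(z)=0,\]
since the recursion $a_{n+1}=a_n/((n+1)(x+n))$ for $a_n=1/((x)_n n!)$ is exactly what this equation encodes.

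Next, since $P(z)=H'(z)/H(z)$ by the standard identity $P=H'/H$ recalled in Section~\ref{sectwo}, we have $H'=PH$ and therefore $H''=(P'+P^2)H$. Substituting into the ODE and dividing through by $H(z)$ (legitimate as a formal power series identity since $H(0)=1$) yields
\[z\bigl(P'(z)+P(z)^2\bigr)+xP(z)-1=0,\]
which is precisely the first claim of the lemma.

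Finally, I would obtain \eqref{eqseven} by extracting the coefficient of $z^n$ from the functional equation. Writing $P(z)=\sum_{n\ge1}p_n z^{n-1}$, the coefficient of $z^{n-1}$ in $zP(z)^2$ is $\sum_{r=1}^{n-1}p_r p_{n-r}$, the coefficient of $z^{n-1}$ in $zP'(z)$ is $(n-1)p_n$, and the coefficient in $xP(z)$ is $xp_n$; the right-hand side contributes only at $n=1$ (confirming $p_1=1/x$). For $n\ge 2$ one gets $(n-1+x)p_n=-\sum_{r=1}^{n-1}p_r p_{n-r}$, and reindexing $n\mapsto n+1$ produces \eqref{eqseven}. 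There is no real obstacle here; the only thing one must be careful about is the $n=1$ boundary case, which separates the inhomogeneous term from the homogeneous recurrence.
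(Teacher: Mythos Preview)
Your argument is correct. The route, however, differs from the paper's. The paper does not pass through the second-order linear ODE for $H$; instead it uses the shift relation $\tfrac{d}{dz}\,{}_0F_1(x;z)=\tfrac{1}{x}\,{}_0F_1(x+1;z)$ (already recorded in Lemma~\ref{lemtwo}) to write $P(z)={}_0F_1(x+1;z)/(x\,{}_0F_1(x;z))$, computes $P^2+P'$ as a ratio of ${}_0F_1$'s with shifted parameters, and then reduces it using the contiguous relation ${}_0F_1(x;z)-{}_0F_1(x+1;z)=\tfrac{z}{x(x+1)}\,{}_0F_1(x+2;z)$. Your approach is the classical Riccati substitution: verify $zH''+xH'-H=0$ directly from the coefficients, set $H'=PH$, $H''=(P'+P^2)H$, and divide by the unit $H$. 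This is shorter and entirely self-contained, avoiding the contiguous relation; the paper's approach, on the other hand, keeps the hypergeometric flavor explicit and reuses the machinery already set up in Lemma~\ref{lemtwo}. The coefficient extraction you give for \eqref{eqseven} is fine, including the handling of the $n=1$ boundary term.
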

\begin{proof}
In view of 
\[P(z)=\frac{{}_0F_{1}(x+1;z)}{x\,{}_0F_{1}(x;z)},\]
we have
\[P(z)^2+\frac{d}{dz}\,P(z)=
\frac{1}{x^2\,{}_0F_{1}(x;z)^2}\Big({}_0F_{1}(x+1;z)^2+\frac{x}{x+1}{}_0F_{1}(x;z){}_0F_{1}(x+2;z)-{}_0F_{1}(x+1;z)^2\Big),\]
hence
\[zP(z)^2+z\frac{d}{dz}\,P(z)+xP(z)=
\frac{z}{x(x+1)}\frac{{}_0F_{1}(x+2;z)}{{}_0F_{1}(x;z)}+\frac{{}_0F_{1}(x+1;z)}{{}_0F_{1}(x;z)}.\]
We have
\[{}_0F_{1}(x;z)-{}_0F_{1}(x+1;z)=\frac{z}{x(x+1)}{}_0F_{1}(x+2;z),\]
since
\[\frac{z^k}{k!}\left(\frac{1}{(x)_k}-\frac{1}{(x+1)_k}\right)=
\frac{z}{x(x+1)}\frac{z^{k-1}}{(k-1)!}\frac{1}{(x+2)_{k-1}}.\]
\end{proof}

Substituting $x=2$, we obtain a second (inductive) proof of Theorem \ref{thone}. 
\begin{theo}\label{ththree}
The integers $\{A_n, n\ge 2\}$ are positive, increasing and given by
\[A_{n+1}=\sum_{r=1}^n\frac{n-r+1}{n+2}\binom{2n+1}{2r-1}A_rA_{n-r+1}.\]
The numbers $\{a_n, n\ge 2\}$ are positive, increasing and given by
\[a_{n+1}=\frac{1}{2}\sum_{r=1}^n\frac{1}{n+1}\binom{n+1}{r+1}\binom{n+1}{r-1} a_ra_{n-r+1}.\]
\end{theo}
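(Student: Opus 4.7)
My plan is to use Lemma~\ref{lemthree} as the engine and extract both recurrences by straightforward substitution. For the first formula, I would set $x=2$ in \eqref{eqseven} and substitute $p_k = (-1)^{k-1}A_k/(2(2k-1)!)$ from \eqref{eqsix}. On the left side this produces a factor of $(-1)^n$, and on the right each product $p_r p_{n-r+1}$ contributes $(-1)^{r-1+n-r} = (-1)^{n-1}$, so after absorbing the leading minus sign the overall factor $(-1)^n$ matches and cancels, leaving
\[(n+2)\frac{A_{n+1}}{(2n+1)!} = \frac12\sum_{r=1}^n \frac{A_r A_{n-r+1}}{(2r-1)!(2n-2r+1)!}.\]
Rewriting the inner factorials by means of $(2n+1)!/[(2r-1)!(2n-2r+1)!] = (2n-2r+2)\binom{2n+1}{2r-1}$ and observing that $(2n-2r+2)/(2(n+2)) = (n-r+1)/(n+2)$ then recovers the stated formula for $A_{n+1}$.

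For the $a_n$ formula, I would simply substitute $A_k = a_k C_k / 2$ into the identity just obtained. The combinatorial coefficient becomes
\[\frac{n-r+1}{n+2}\binom{2n+1}{2r-1}\cdot\frac{C_r\, C_{n-r+1}}{2\,C_{n+1}},\]
and using $C_k = \binom{2k}{k}/(k+1)$ together with direct factorial cancellation (the key step is $(2n+1)!\,(n+1)!^2/(2n+2)! = (n+1)!\,n!/2$, after which the leftover numerator collapses via $r/((r+1)!\,r!) = 1/((r+1)!\,(r-1)!)$ and its analogue in $n-r$) reduces this coefficient to $\frac{1}{2(n+1)}\binom{n+1}{r+1}\binom{n+1}{r-1}$, as required.

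Positivity and the strictly increasing property then follow simultaneously by induction on $n$. If $A_1,\dots,A_n$ are positive, every summand in the recurrence for $A_{n+1}$ is positive; moreover the single $r=n$ contribution already equals $\frac{n(2n+1)}{n+2}A_n$, which strictly exceeds $A_n$ for all $n\ge 2$, so $A_{n+1} > A_n$. The same argument applies verbatim to $a_n$, where the $r=n$ term contributes $\frac{n}{2}a_n$; this exceeds $a_n$ for $n\ge 3$, and the single case $a_3 > a_2$ is read off directly from the tabulated values. The only real obstacle in this plan is bookkeeping, particularly tracking signs through \eqref{eqsix} and organizing the factorial simplification in the second step; there is no genuinely new idea beyond Lemma~\ref{lemthree}.
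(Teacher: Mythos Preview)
Your proposal is correct and follows exactly the route the paper takes: substitute \eqref{eqsix} into \eqref{eqseven} at $x=2$ to obtain the $A_n$ recurrence, then substitute $A_k=a_kC_k/2$ to derive the $a_n$ recurrence, and finish both positivity and monotonicity by induction. You have simply written out the factorial bookkeeping and the monotonicity estimate (via the $r=n$ term) that the paper leaves implicit.
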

\begin{proof}
We substitute \eqref{eqsix} into \eqref{eqseven}. Then we substitute $A_n=a_nC_n/2$. In both cases we proceed by induction.
\end{proof}

The positive integer
\[c_{n,r}=\frac{2}{n+1}\binom{n+1}{r+1}\binom{n+1}{r-1}=
\binom{n}{r-1}\binom{n}{r}-\binom{n}{r-2}\binom{n}{r+1}\]
is known~\cite{G,G2} to be the number of walks of $n$ unit steps on the square lattice (i.e. either up, down, right or left) starting from (0,0), finishing at $(2r-n-1,1)$ and remaining in the upper half-plane $y\ge 0$. One has $c_{n,r}=c_{n,n-r+1}$, which corresponds to symmetry with respect to  the $y$-axis.

We need some arithmetic results about $c_{n,r}$. Given a rational number $r$, there exist unique integers $a,b,m$ with $a$ and $b$ odd such that $r=2^m a/b$. The integer $m$ is called the $2$-adic valuation of $r$ and denoted $\nu_2(r)$. A classical theorem of Kummer (appeared in 1852, see~\cite{Sur} for an inductive proof) states that the $2$-adic valuation of $\binom{a+b}{a}$ is a nonnegative integer, equal to the number of ``carries'' necessary to add $a$ and $b$ in base $2$.

\begin{lem}\label{lemfour}
(i) If $n$ is even, $c_{n,r}$ is even.

(ii) If $n$ is odd and $r$ even, $c_{n,r}$ is a multiple of $4$.
\end{lem}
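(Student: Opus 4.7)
The plan is to work with the factored form
\[c_{n,r}=\frac{2}{n+1}\binom{n+1}{r+1}\binom{n+1}{r-1}\]
(recalled just above the lemma) and compare $2$-adic valuations of both sides via Kummer's theorem, which gives
\[\nu_2(c_{n,r})=1+\nu_2\!\binom{n+1}{r+1}+\nu_2\!\binom{n+1}{r-1}-\nu_2(n+1).\]
Part~(i) then comes for free: if $n$ is even, then $n+1$ is odd, so $\nu_2(n+1)=0$, and the formula forces $\nu_2(c_{n,r})\ge 1$.

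For part~(ii) I would set $a:=\nu_2(n+1)\ge 1$ (so that $n+1=2^a m$ with $m$ odd) and prove that each of the two binomial coefficients has $2$-adic valuation at least $a$; this gives $\nu_2(c_{n,r})\ge 1+a\ge 2$ as required. By Kummer's theorem, $\nu_2\binom{n+1}{r+1}$ is the number of carries produced when $r+1$ and $n-r$ are added in base $2$. Under the hypotheses ($n$ odd, $r$ even), both summands are \emph{odd}, and their sum $n+1$ has exactly $a$ trailing $0$-bits. I would argue by induction on $i\in\{0,1,\ldots,a-1\}$ that a carry is produced at position $i$: position~$0$ sums $1+1=2$, giving bit~$0$ and carry~$1$; and if a carry enters position $i$ for $1\le i\le a-1$ while bit $i$ of $n+1$ equals $0$, then the digits of $r+1$ and $n-r$ at position $i$ must have odd sum, forcing the column total to equal $2$ and hence producing another carry. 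This yields at least $a$ carries, so $\nu_2\binom{n+1}{r+1}\ge a$; the identical argument applied to the pair $r-1$ and $n-r+2$ handles $\nu_2\binom{n+1}{r-1}$.

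The only real obstacle is the carry-propagation bookkeeping in part~(ii); once one notices that the parity hypotheses force both summands in each binomial to be odd and the target $n+1$ to have $a$ trailing zeros, the induction reduces to a direct base-$2$ calculation, and the divisibility claims fall out of the single identity for $\nu_2(c_{n,r})$ displayed above.
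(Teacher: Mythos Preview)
Your proof is correct. Part~(i) is exactly the paper's argument. For part~(ii) the paper takes a shorter route: rather than keeping the term $-\nu_2(n+1)$ and compensating for it, it uses the identity $\frac{1}{n+1}\binom{n+1}{r+1}=\frac{1}{r+1}\binom{n}{r}$, valid here because $r+1$ is odd, to rewrite
\[
\nu_2(c_{n,r})=1+\nu_2\!\binom{n}{r}+\nu_2\!\binom{n+1}{r-1}.
\]
Then a single carry suffices, obtained from $\binom{n+1}{r-1}$ via precisely your observation that $r-1$ and $n-r+2$ are both odd. Your carry-propagation argument requires more bookkeeping but actually delivers the sharper bound $\nu_2(c_{n,r})\ge 1+\nu_2(n+1)$, while the paper only extracts $\nu_2(c_{n,r})\ge 2$.
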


\begin{proof}\textit{(i)} Since $n+1$ is odd, we have
\[\nu_2(c_{n,r})=1+\nu_2\left(\binom{n+1}{r+1}\right)+\nu_2\left(\binom{n+1}{r-1}\right)\ge 1.\]
\textit{(ii)} Since $r+1$ is odd, we have
\[\nu_2(c_{n,r})=1+\nu_2\left(\binom{n}{r}\right)+\nu_2\left(\binom{n+1}{r-1}\right).\]
Since $r-1$ and $n-r+2$ are odd, at least one carry is needed to add them in base $2$. Thus $\nu_2(\binom{n+1}{r-1})\ge 1$.
\end{proof}

We also consider
\[c_{2p-1,p}=\frac{1}{p}\binom{2p}{p+1}^2.\]

\begin{lem}\label{lemfive}
(i) If $p$ is even, $c_{2p-1,p}$ is a multiple of $8$.

(ii) If $p$ is odd and $p\neq 2^k-1$ for any $k\ge 1$, $c_{2p-1,p}$ is a multiple of $4$.

(iii) If $p=2^k-1$ for some $k\ge 1$, $c_{2p-1,p}$ is odd.
\end{lem}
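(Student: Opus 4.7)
The plan is to reduce everything to one explicit $2$-adic valuation formula and then split into the three cases. The key identity is
\[c_{2p-1,p}=\frac{1}{p}\binom{2p}{p+1}^2=\frac{p}{(p+1)^2}\binom{2p}{p}^2,\]
obtained by rewriting $\binom{2p}{p+1}=\frac{p}{p+1}\binom{2p}{p}$. By Kummer's theorem, the number of carries when adding $p$ to itself in base $2$ equals $s_2(p)$, the binary digit sum of $p$; hence $\nu_2\bigl(\binom{2p}{p}\bigr)=s_2(p)$. Combining yields the master formula
\[\nu_2(c_{2p-1,p})=\nu_2(p)-2\nu_2(p+1)+2s_2(p),\]
which I will use throughout.

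For part \textit{(i)}, with $p$ even I have $\nu_2(p)\ge1$, $\nu_2(p+1)=0$, and $s_2(p)\ge1$, so the master formula gives $\nu_2(c_{2p-1,p})\ge 1+2=3$.

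For parts \textit{(ii)} and \textit{(iii)}, $p$ is odd, so $\nu_2(p)=0$ and the formula becomes $2s_2(p)-2\nu_2(p+1)$. Write $p$ in binary and let $j$ be the position of its least significant $0$ bit; then bits $0,\dots,j-1$ are all $1$, so incrementing yields $\nu_2(p+1)=j$, and $s_2(p)=j+t$ where $t$ is the number of $1$ bits of $p$ at positions $>j$. If $p=2^k-1$ there is no $0$ bit below the leading bit, so $p+1=2^k$ and $s_2(p)=k=\nu_2(p+1)$, giving $\nu_2(c_{2p-1,p})=0$; this is part \textit{(iii)}. Otherwise $p$ has a leading $1$ bit above position $j$, so $t\ge1$, hence $s_2(p)\ge\nu_2(p+1)+1$ and $\nu_2(c_{2p-1,p})\ge 2$; this is part \textit{(ii)}.

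I do not expect any real obstacle here: the identification of $\nu_2(p+1)$ and $s_2(p)$ via the binary expansion is a standard lemma, and the three cases fall out immediately once the master formula is in hand. The only point requiring a little care is the structural claim that ``$p$ odd, $p\ne 2^k-1$'' is exactly the condition $s_2(p)>\nu_2(p+1)$, which is what the binary-expansion argument above establishes.
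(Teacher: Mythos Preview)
Your proof is correct and rests on the same tool as the paper (Kummer's theorem for $2$-adic valuations of binomial coefficients), but you organize it more uniformly. The paper handles case (i) via the rewriting $c_{2p-1,p}=\frac{4p}{(p+1)^2}\binom{2p-1}{p}^2$ and cases (ii)--(iii) via $\nu_2\binom{2p}{p+1}$, analyzing carries when adding $p+1$ and $p-1$ in base $2$; you instead use the single identity $c_{2p-1,p}=\frac{p}{(p+1)^2}\binom{2p}{p}^2$ together with the standard fact $\nu_2\binom{2p}{p}=s_2(p)$ to obtain one master formula $\nu_2(c_{2p-1,p})=\nu_2(p)-2\nu_2(p+1)+2s_2(p)$, from which all three cases drop out by inspecting $s_2(p)$ versus $\nu_2(p+1)$. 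The content is equivalent, but your packaging is a bit cleaner since it avoids separate rewritings for the even and odd cases.
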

\begin{proof}\textit{(i)} We have
\[c_{2p-1,p}=\frac{4p}{(p+1)^2}\binom{2p-1}{p}^2.\]
Since $p+1$ is odd, we have
\[\nu_2(c_{2p-1,p})=3+2\nu_2\left(\binom{2p-1}{p}\right)\ge 3.\]

\textit{(ii)} Since $p$ is odd, we have
\[\nu_2(c_{2p-1,p})=2\nu_2\left(\binom{2p}{p+1}\right).\]
We may write the binary representations of $p+1$ and $p-1$ as
\begin{align*}
p+1=& \,c_{k+m}c_{k+m-1}\ldots c_{k+2}10\ldots00,\\
p-1=& \,c_{k+m}c_{k+m-1}\ldots c_{k+2}01\ldots10.
\end{align*}
At least one carry is needed to add them, except if $c_{k+m}=c_{k+m-1}=\ldots= c_{k+2}=0$, i.e. $p+1=2^k$, which is excluded. Therefore $\nu_2(\binom{2p}{p+1})\ge 1$.

\textit{(iii)} When $p+1=2^k$, the binary representations of $p+1$ and $p-1$ are $10\ldots00$ and $01\ldots10$. Therefore $\nu_2(\binom{2p}{p+1})=0.$
\end{proof}

Our proof of Theorem \ref{thtwo} also needs the following result.
\begin{theo}\label{thfour}
The numbers $\{A_n, n\ge 3\}$ and $\{C_n, n\ge 3\}$ have the same parity. They are odd if and only if $n=2^k-1$ for some $k\ge 2$.
\end{theo}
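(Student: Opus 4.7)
Since $C_n = \binom{2n}{n}/(n+1)$ satisfies $\nu_2(C_n) = s_2(n+1) - 1$ by Kummer's theorem (where $s_2$ denotes the binary digit sum), $C_n$ is odd precisely when $n+1$ is a power of $2$, i.e.\ when $n = 2^k-1$. The second assertion of the theorem therefore follows from the first, so my main task is to prove $A_n \equiv C_n \pmod{2}$ for all $n \ge 3$.

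I would proceed by strong induction on $n$, using the original recurrence \eqref{eqthree} reduced modulo $2$:
\[A_n \equiv C_n + \sum_{j=1}^{n-1}\binom{2n-1}{2j-1}\, A_j\, C_{n-j} \pmod{2}.\]
Applying the induction hypothesis $A_j \equiv C_j \pmod{2}$ for $3 \le j \le n-1$, with base values $A_1 = 1 = C_1$ (harmless) and $A_2 = 1 \not\equiv C_2 = 2 \pmod{2}$ (which leaves behind an exceptional contribution $\binom{2n-1}{3}\, C_{n-2}$), the induction step reduces to the residual Catalan identity
\[\sum_{j=1}^{n-1}\binom{2n-1}{2j-1}\, C_j\, C_{n-j} \equiv \binom{2n-1}{3}\, C_{n-2} \pmod{2} \qquad (n \ge 3).\]

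To establish this identity I would show that both sides are individually $\equiv 0 \pmod{2}$. For the right-hand side, Lucas's theorem gives $\binom{2n-1}{3} \equiv n-1 \pmod{2}$, and $(n-1)\, C_{n-2}$ is visibly even for $n \ge 3$: if $n$ is odd then $n-1$ is even, while if $n \ge 4$ is even then $n-2 \ge 2$ is even and hence $C_{n-2}$ is even (being odd would force $n-2 = 2^c-1$, i.e.\ $n = 2$, which is excluded). For the left-hand side, Lucas's theorem also gives $\binom{2n-1}{2j-1} \equiv \binom{n-1}{j-1} \pmod{2}$ (obtained by isolating the low bits of $2n-1$ and $2j-1$), so a term in the sum is odd only when $\binom{n-1}{j-1}$ is odd, $j = 2^a-1$, and $n-j = 2^b-1$ for some $a,b \ge 1$; equivalently $n = 2^a + 2^b - 2$.

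A brief case analysis on $(a,b)$ closes the argument: when $\min(a,b) \ge 2$, the binary expansion of $n-1 = 2^a+2^b-3$ has bit $1$ equal to $0$ whereas $j-1 = 2^a-2$ has bit $1$ equal to $1$, so Lucas forces $\binom{n-1}{j-1}$ to be even (and the self-paired subcase $a = b$ is subsumed); when $\min(a,b) = 1$, so $n = 2^a$, the two odd terms $j = 1$ and $j = n-1$ together sum to $n\, C_{n-1} = 2^a\, C_{2^a-1}$, which is even. Hence the left-hand side is even, the induction step closes, and the theorem follows. I expect the main obstacle to be carrying out this binary-representation case analysis uniformly in $n$; the clean Lucas-theorem simplification $\binom{2n-1}{2j-1} \equiv \binom{n-1}{j-1} \pmod{2}$ is what makes it tractable.
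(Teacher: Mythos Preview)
Your proposal is correct and follows essentially the same route as the paper: strong induction via the recurrence \eqref{eqthree}, reducing the parity question to a term-by-term analysis governed by the binary-digit conditions $j=2^a-1$, $n-j=2^b-1$, with the case split according to whether $n$ is a power of $2$. Your organization differs only cosmetically---you substitute $A_j\to C_j$ first (isolating the $j=2$ anomaly explicitly, which the paper leaves implicit) and use the Lucas reduction $\binom{2n-1}{2j-1}\equiv\binom{n-1}{j-1}$ where the paper applies Kummer directly to $\binom{2n-1}{2j-1}$.
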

\begin{proof}It is known~\cite{O,KS} that the Catalan numbers $\{C_n, n\ge 3\}$ are odd if and only if $n=2^k-1$ for some $k\ge 2$. Therefore we have only to prove the first statement.

We proceed by induction on $n$. We use \eqref{eqthree} to express $(-1)^{n}A_n+C_n$, as a signed sum of the terms
\[y_j=\binom{2n-1}{2j-1} A_j \,C_{n-j}\]
for $1\le j \le n-1$. Two cases must be considered. 

Case 1: $n=2^m$ for some $m$. Then $y_1$ and $y_{n-1}$ are obviously odd. On the other hand, the contributions $\{y_j, 2\le j\le n-2\}$ are even, because the conditions $n=2^m$, $j=2^k-1$ and $n-j=2^l-1$ cannot be simultaneously satisfied if $j\neq 1$ or $n-j\neq 1$.

Case 2: $n\neq 2^m$ for any $m$. Then all contributions $\{y_j, 1\le j\le n-1\}$ are even. Indeed if $A_j$ and $C_{n-j}$ are odd, we have $j=2^k-1$ and $n-j=2^l-1$ with $k,l\ge 2$, since $j$ and $n-j$ cannot be equal to $1$. But then $\binom{2n-1}{2j-1}$ is even because we have
\[\binom{2n-1}{2j-1}=\binom{a+b}{a},\]
with $a=2^{k+1}-3$ and $b=2^{l+1}-2$. Then the binary representations of $a$ and $b$ are $1\ldots 1101$ ($k+1$ digits) and $1\ldots 1110$ ($l+1$ digits). Since $k,l\ge 2$, at least one carry is needed to add them. Summing all contributions, $A_n$ and $C_n$ have the same parity.
\end{proof}

\begin{proof}[\textbf{Proof of Theorem \ref{thtwo}}]
We proceed by induction on $n$. Our inductive hypothesis is twofold. We assume that for any $m\le n$, the positive number $a_m$ is an integer, and that $a_m$ is odd if and only if $m=2^k-2$ for some $k\ge 2$.

If $n$ is even, say $n=2p$, $a_{n+1}$ is an integer because by Theorem 3 we have
\[a_{2p+1}=\frac{1}{2}\sum_{r=1}^p c_{2p,r} a_ra_{2p-r+1},\]
and we apply Lemma \ref{lemfour} (i). Moreover $a_{2p+1}$ is even, since $a_r$ or $a_{2p-r+1}$ is even for each $r$.

If $n$ is odd, say $n=2p+1$, we have
\begin{equation}\label{pl}
a_{2p+2}=\frac{1}{2}\sum_{r=1}^p c_{2p+1,r} a_ra_{2p-r+2}+\frac{1}{4}c_{2p+1,p+1} a_{p+1}^2.
\end{equation}
Each term of the form $c_{2p+1,r}$ or $a_ra_{2p-r+2}$ is an even integer. This results from Lemma \ref{lemfour} (ii) if $r$ is even, and from the fact that $a_r$ and $a_{2p-r+2}$ are even, if $r$ is odd. We are left to consider the term 
\[w_p=\frac{1}{4}c_{2p+1,p+1} a_{p+1}^2.\]
This is an integer whenever $a_{p+1}$ is even. If $a_{p+1}$ is odd, by 
our induction hypothesis, one has $p+1=2^k-2$ for some $k\ge 2$, and we apply Lemma \ref{lemfive} (i). Summing all contributions, $a_{2p+2}$ is an integer. 

It remains to show that $a_{2p+2}$ is odd if and only if $2p+2=2^k-2$, i.e. $p+1=2^{k-1}-1$, for some $k$. We have seen that all terms on the right hand side of \eqref{pl} are even, except possibly for $w_p$.

If $p+1$ is even, $w_p$ is even because of Lemma \ref{lemfive} (i). If $p+1$ is odd and $p+1\neq 2^k-1$ for any $k$, $w_p$ is even by Lemma \ref{lemfive} (ii) and because $a_{p+1}$ is even. Finally the only possibility for $w_p$ to be odd occurs when $p+1=2^k-1$ for some $k$. Then in view of Theorem \ref{thfour}, $A_{p+1}$ and $C_{p+1}$ are odd. Hence $a_{p+1}^2/4=(A_{p+1}/C_{p+1})^2$ is also odd. Applying Lemma \ref{lemfive} (iii), $w_p$ is odd.
\end{proof}

\section{Extension to type $B$}

Generalized Narayana numbers have been introduced in~\cite[Section 5.2]{F} in the context of the non-crossing partition lattice for the reflection group associated with a root system. Ordinary Narayana polynomials correspond to a root system of type $A$. For a root system of type $B$, generalized Narayana polynomials are defined~\cite[Example 5.8]{F} by $W_0(z)=1$ and
\[W_r(z)=\sum_{k=0}^{r} {\binom{r}{k}}^2 z^{k}.\]
For their combinatorial study we refer to~\cite{Ch2,Ch} and references therein.   

We have $W_r\eqref{eqone}=W_r$, the central binomial coefficient, since
\[ W_r=\binom{2r}{r}=\sum_{k=0}^{r} {\binom{r}{k}}^2.\]
Moreover~\cite[equation (2.1)]{Ch} the Narayana polynomial $W_r(z)$ may be expressed in terms of central binomial coefficients as
\begin{equation}\label{eqeight}
W_r(z)=\sum_{m\ge0} z^m (z+1)^{r-2m} \binom{r}{2m}W_m.
\end{equation}

Exactly as in Section \ref{sectwo} the polynomial 
\[(z+1)\,W_r(z)-W_{r+1}(z)=-2rz^{r}+\ldots\]
may be expressed, in a unique way, in terms of the monic polynomials $z^mW_{r-2m+1}(z)$ with degree $r-m+1$. As a consequence, 
we may define the real numbers $B_m(r)$ by
\begin{equation}\label{eqnine}
(z+1)\,W_r(z)-W_{r+1}(z)=\sum_{m\ge1}
(-z)^m\binom{r}{2m-1}B_m(r)\, W_{r-2m+1}(z).
\end{equation}
Note that the real numbers $\{B_m(r), m\ge 1\}$ depend on $r$ and that we have $B_1(r)=2$.

Using relation \eqref{eqeight}, and paralleling the computation in Section \ref{secone}, this definition is easily shown to be equivalent to
\[W_n=\sum_{j=1}^n (-1)^{j-1} \binom{2n-1}{2j-1} B_j(r) \,W_{n-j}.\]
Therefore $B_n:=B_n(r)$ is an integer independent of $r$, given by the recurrence formula
\begin{equation}\label{eqten}
(-1)^{n-1}B_n=W_n+\sum_{j=1}^{n-1} (-1)^{j} \binom{2n-1}{2j-1} B_j \,W_{n-j},
\end{equation}  
which can be chosen as a definition, equivalent to \eqref{eqnine}.

As in Section \ref{sectwo}, if we define
\begin{align*}
\mathbf{W}(z)&=\sum_{n\ge 0} \frac{W_n}{(2n)!}z^n=\sum_{n\ge 0} \frac{1}{(n!)^2}z^n,\\
\mathbf{B}(z)&=\sum_{m\ge 1} (-1)^{m-1}\frac{B_m}{(2m-1)!}z^{m-1}.
\end{align*}
the definition \eqref{eqten} is equivalent with
\begin{equation*}
\mathbf{B}(z)\mathbf{W}(z)=2\frac{d}{dz}\mathbf{W}(z).
\end{equation*}
Obviously $\mathbf{W}(z)={}_0F_{1}(1;z)$. Therefore we are in the situation of Lemma \ref{lemtwo} with $x=1$ and we have
\begin{equation}\label{eqel}
B_n= (-1)^{n-1}2\, (2n-1)!  \, p_n\arrowvert_{x=1}.
\end{equation} 

As a consequence, the integers $\{B_n, n\ge 1\}$ are positive and increasing. Moreover their generating function is given by
\[\sum_{m\ge 1} (-1)^{m-1}\frac{B_m}{(2m-1)!}z^{m-1}=
\cfrac{2}{1 + \cfrac{z}
{2 + \cfrac{z}{3 + \cfrac{z}{4 + {}\ddots}}}}.\]

By an easy transformation of \eqref{eqten}, it is clear that the positive numbers $b_n=B_n/W_n$ are inductively defined by
\[(-1)^{n-1}b_n=1+\sum_{k=1}^{n-1}(-1)^{k} \binom{n-1}{k-1}\binom{n}{k}b_k, \]
which proves that $b_n$ is an integer. This sequence of positive integers is not new. It was studied by Carlitz~\cite[equation \eqref{eqthree}]{Ca} in connection with Bessel functions.

Substituting \eqref{eqel} into Lemma \ref{lemthree}, we have also
\begin{align*}
B_{n+1}&=\sum_{r=1}^n\frac{n-r+1}{n+1}\binom{2n+1}{2r-1}B_rB_{n-r+1},\\
b_{n+1}&=\sum_{r=1}^n\binom{n}{r}\binom{n}{r-1}b_rb_{n-r+1}.
\end{align*}
The last equation was already mentioned in~\cite{Ca}. It is very similar to Theorem \ref{ththree}. Actually the positive integer
\[\binom{n}{r}\binom{n}{r-1}\]
is known~\cite{G,G2} to be the number of walks of $n$ unit steps on the square lattice (i.e. either up, down, right or left) starting from (0,0) and finishing at $(2r-n-1,1)$.\\

\noindent\textit{Remark: }The Narayana polynomial of type $D$ is given~\cite[Figure 5.12]{F} by $W_r(z)-rzC_{r-1}(z)$. There is an analog of \eqref{eqtwo} and (10), but it does not seem to involve an integer sequence.

\section{Open problems}

As indicated at the end of the introduction, we are in lack of a combinatorial interpretation for $a_n$, possibly in the framework of probability theory. Here are two open questions which might be also investigated.

\subsection{Schur functions}

It is known~\cite[p. 23]{Ma} that the elementary symmetric functions $e_n$ satisfy
\[ne_n=\sum_{r=1}^n (-1)^{r-1} p_r e_{n-r}.\]
Lemma \ref{lemtwo} states that, when $h_n$ is specialized to $1/((x)_nn!)$ with $x>0$, $(-1)^{n-1} p_n$ is positive. By an easy induction this implies the positivity of $e_n$.

Since $h_n=s_n$ and $e_n=s_{1^n}$ are the row and column Schur functions, we are led to the following conjecture (checked for any partition $\lambda$ with weight $\le 12$).
\begin{conj}
When the complete symmetric functions are specialized to $h_n=1/((x)_nn!)$ with $x>0$, any Schur function $s_\lambda$ takes positive values.
\end{conj}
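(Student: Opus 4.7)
The plan is to factor the generating function $H(z)={}_0F_1(x;z)$ into an infinite product with positive parameters, and then to interpret our specialization via the classical involution $\omega$ on symmetric functions, reducing Schur positivity to that of an ordinary Schur polynomial evaluated at positive reals.

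First, the analytic step. Writing ${}_0F_1(x;z)=\Gamma(x)\,z^{(1-x)/2}\,I_{x-1}(2\sqrt{z})$ with $I_\nu$ the modified Bessel function, the zeros of the entire function $H(z)$ are in bijection with those of $I_{x-1}(2\sqrt{z})$. For $x>0$, i.e.\ $\nu=x-1>-1$, the classical theorem on the reality of the zeros of $J_\nu$ implies that $I_{x-1}$ has only purely imaginary zeros, so $H(z)$ vanishes only at the negative points $z_k=-j_{x-1,k}^2/4$, where $j_{x-1,k}$ is the $k$th positive zero of $J_{x-1}$. Since $H(z)$ is entire of order $1/2$, Hadamard's factorization theorem yields
$$H(z)=\prod_{k\ge 1}(1+\alpha_k z),\qquad \alpha_k:=\frac{4}{j_{x-1,k}^2}>0,$$
with $\sum_k\alpha_k<\infty$ by the asymptotics $j_{x-1,k}\sim k\pi$.

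Next, the algebraic translation. Let $\omega$ be the involution of $\mathbb{S}$ sending $h_n$ to $e_n$, and hence $s_\lambda$ to $s_{\lambda'}$; and let $\mathrm{ev}_\alpha:\mathbb{S}\to\mathbb{R}$ be the evaluation at $(\alpha_1,\alpha_2,\ldots)$ (well-defined, since $h_n(\alpha)\le(\sum_k\alpha_k)^n<\infty$). The factorization above says exactly that our specialization coincides with the composite $\mathrm{ev}_\alpha\circ\omega$: indeed $(\mathrm{ev}_\alpha\circ\omega)(h_n)=e_n(\alpha_1,\alpha_2,\ldots)=[z^n]H(z)$, which equals $h_n$ under our specialization. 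Therefore
$$s_\lambda\;\longmapsto\;\mathrm{ev}_\alpha(s_{\lambda'})=s_{\lambda'}(\alpha_1,\alpha_2,\ldots),$$
and the right-hand side is strictly positive, since $s_{\lambda'}$ is a sum of monomials with positive integer coefficients in variables now set to positive reals (and infinitely many of them are available).

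The hard part is the analytic input, namely the location of the zeros of ${}_0F_1(x;z)$, which reduces to the classical reality of the Bessel zeros $j_{\nu,k}$ for $\nu>-1$; this in turn rests on the Bessel ODE together with a Gram-matrix/orthogonality argument excluding non-real conjugate pairs. Once this is in place, Hadamard's factorization theorem and the $\omega$-translation are routine. A pleasant byproduct is an explicit infinite product for $H(z)$, and the representation of any specialized $s_\lambda$ as the value of an ordinary Schur polynomial at the sequence $(4/j_{x-1,k}^2)_{k\ge 1}$, which looks potentially useful beyond the statement of the conjecture.
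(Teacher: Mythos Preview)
The paper does not prove this statement: it is listed as an open conjecture in Section~5, supported only by numerical evidence up to weight $12$. Your argument, however, is correct and actually settles the conjecture. The decisive analytic input is the classical fact that for $\nu>-1$ all zeros of $J_\nu$ are real, so that ${}_0F_1(x;\cdot)$ has only negative real zeros; combined with Hadamard's theorem for entire functions of order $<1$ (here order $1/2$, so genus $0$ and no exponential factor), this yields $H(z)=\prod_{k\ge1}(1+\alpha_k z)$ with $\alpha_k=4/j_{x-1,k}^2>0$. The algebraic step, identifying the specialization with $\mathrm{ev}_\alpha\circ\omega$ and thereby reducing to the positivity of $s_{\lambda'}$ evaluated at infinitely many positive reals, is clean and correct (convergence being guaranteed by $\sum_k\alpha_k<\infty$).

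It may be worth noting that what you have really shown is that $H(z)={}_0F_1(x;z)$ lies in the Aissen--Schoenberg--Whitney/Edrei--Thoma class of generating functions whose associated specialization of $\mathbb{S}$ is Schur-nonnegative; so the conjecture could also be obtained as a corollary of that general theory once one knows the zeros of ${}_0F_1(x;\cdot)$ are negative. Your direct route has the advantage of giving the explicit parameters $\alpha_k$, and hence the closed form $s_\lambda\mapsto s_{\lambda'}\bigl(4/j_{x-1,1}^2,\,4/j_{x-1,2}^2,\ldots\bigr)$, which goes some way toward the ``nice expression of $s_\lambda$ in terms of $x$'' the paper asks for just after the conjecture.
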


A nice expression of $s_\lambda$ in terms of $x$ may also exist. For elementary symmetric functions $e_n$, such a formula can be obtained  as a consequence of their generating function~\cite{Wo}

\[E(z)=\sum_{n\ge0}e_nz^n=\frac{1}{{}_0F_{1}(x;-z)}=\cfrac{1}{1 - \cfrac{u_1z}
{1+\cfrac{u_2z}{1-u_2z +\cfrac{u_3z}{1-u_3z+ \cfrac{u_4z}{1-u_4z+\ddots}}}}},\]
with $u_n=1/(n(x+n-1))$. For another way of writing $E(z)$ as a continuous fraction involving ratios of Schur functions, we refer to~\cite[Sections 4.3 and 5.2]{Las}.

\subsection{An identity}

By a classical result~\cite[p.335]{K} we have 
\[C_{r+1}-2\,C_r=\frac{2}{r}\binom{2r}{r-2}.\]
For $z=1$ the definition \eqref{eqtwo} yields
\[2\,C_n-C_{n+1}=\sum_{m\ge1}
(-1)^{m}\binom{n-1}{2m-1}A_m\, C_{n-2m+1},\]
hence
\[\sum_{m\ge1}
(-1)^{m}\binom{n-1}{2m-1}A_m\, C_{n-2m+1}=-\frac{2}{n}\binom{2n}{n-2}.\]
This formula can be used in two ways to define $A_n$ inductively. Actually if we write it for $n=2r$ and $n=2r+1$, we obtain
\begin{align*}
(-1)^{r-1} A_r &=\frac{1}{r}\binom{4r}{2r-2}+\sum_{m=1}^{r-1}
(-1)^m\binom{2r-1}{2m-1}A_m\, C_{2r-2m+1},\\
(-1)^{r-1} 4r A_r &=\frac{2}{2r+1}\binom{4r+2}{2r-1}+\sum_{m=1}^{r-1}
(-1)^m\binom{2r}{2m-1}A_m\, C_{2r-2m+2}.
\end{align*}
Both definitions of $A_r$ must be consistent, which yields
\begin{multline*}
\sum_{m=1}^{r-1}
(-1)^mA_m\left(\binom{2r}{2m-1}\, C_{2r-2m+2} -4r
\binom{2r-1}{2m-1}\, C_{2r-2m+1}\right)\\=
4\binom{4r}{2r-2}-\frac{2}{2r+1}\binom{4r+2}{2r-1}.
\end{multline*}
Dividing both sides by $C_{2r}$ we obtain 
\[\sum_{m=1}^{r-1}
\Big(-\frac{1}{4}\Big)^{m}\,\frac{A_m }{(2m-1)!}\,\frac{r-m}{2r-2m+3}\prod_{i=0}^{2m-2}\frac{(2r-i)^2}{4r-2i-1}=-\frac{2r(r-1)}{(2r+1)(2r+3)},\]
a very strange identity for which we are in lack of an interpretation.

A similar identity is obtained for the $B_n$'s by using relation \eqref{eqnine} for $z=1$ and paralleling the above computation. We get
\begin{multline*}
\sum_{m=1}^{r-1}
(-1)^mB_m\left(\binom{2r}{2m-1}\, W_{2r-2m+1} -4r
\binom{2r-1}{2m-1}\, W_{2r-2m}\right)\\=
(4r+2)W_{2r}-8rW_{2r-1}-W_{2r+1}.
\end{multline*}
Dividing both sides by $W_{2r}$ this yields 
\[\sum_{m=1}^{r-1}
\Big(-\frac{1}{4}\Big)^{m}\,\frac{B_m }{(2m-1)!}\,\frac{(r-m)^2}{(2r-2m+1)^2}\prod_{i=1}^{2m-1}\frac{(2r-i)^2}{4r-2i-1}=-\frac{r-1}{2(2r+1)}.\]

In view of \eqref{eqsix} and \eqref{eqel} it would be interesting to investigate whether such an identity is a general property of the $p_n$'s.

\section*{Acknowledgements}

It is a pleasure to thank Christian Krattenthaler, Alain Lascoux, Doron Zeilberger and the referee for their remarks.

\end{document}